\theoremstyle{definition}
\theoremstyle{plain}
\newtheorem{prob}[subsection]{Problem}
\newtheorem{lem}[subsection]{Lemma}
\newtheorem{rmk}[subsection]{Remark}
\newtheorem{prop}[subsection]{Proposition}
\newtheorem{cor}[subsection]{Corollary}
\newcommand{\beq}{\begin{eqnarray}}
\newcommand{\eeq}{\end{eqnarray}}
\newcommand{\beqs}{\begin{eqnarray*}}
\newcommand{\eeqs}{\end{eqnarray*}}
\title{\bf Two Irregularity Measures Possessing High Discriminatory Ability}
\author{
   \large  Akbar Ali${}^{\dag}$, Tam\'{a}s R\'{e}ti${}^{\ddag}$
}
\affil{  \normalsize \it
    $^\dag${University of Management and Technology, Sialkot 51310, Pakistan}
     \\ {\tt akbarali.maths@gmail.com}
}
\affil{ \normalsize
    { \it ${}^{\ddag}$\'{O}buda University, B\'{e}csi\'{u}t, 96/B, H-1034 Budapest, Hungary}
    \\ {\tt reti.tamas@bgk.uni-obuda.hu}
}
\begin{document}

\maketitle

\begin{abstract}

An $n$-vertex graph whose degree set consists of exactly $n-1$ elements is called antiregular graph. Such type of graphs are usually considered opposite to the regular graphs. An irregularity measure ($IM$) of a connected graph $G$ is a non-negative graph invariant satisfying the property:  $IM(G) = 0$ if and only if $G$ is regular. The total irregularity of a graph $G$, denoted by $irr_t(G)$, is defined as $irr_t(G)= \sum_{\{u,v\} \subseteq V(G)} |d_u - d_v|$ where $V(G)$ is the vertex set of $G$ and $d_u$, $d_v$ denote the degrees of the vertices $u$, $v$, respectively. Antiregular graphs are the most nonregular graphs according to the irregularity measure $irr_t$\,; however, various non-antiregular graphs are also the most nonregular graphs with respect to this irregularity measure. In this note, two new irregularity measures having high discriminatory ability are devised. Only antiregular graphs are the most nonregular graphs according to the proposed measures.\\[2mm]
{\bf Keywords:} irregularity measure; total irregularity; antiregular graph; nonregular graph.\\[2mm]
{\bf AMS subject classification 2010:} 05C07, 05C90.

\end{abstract}
%
%
%
%
%
%
%
%
\section[Introduction]{Introduction}

Graphs considered in this article are simple, connected and finite, unless stated otherwise. Sets of vertices and edges of a graph $G$ will be denoted by $V(G)$ and $E(G)$, respectively. Degree of a vertex $u$ and the edge connecting the vertices $u,v\in V(G)$ will be denoted by $d_u$ and $uv$, respectively. By an $n$-vertex graph, we mean a graph with $n$ vertices. An $(n,m)$-graph is an $n$-vertex graph with $m$ edges. The graph theoretical terminology, not defined here, can be found in some standard books of graph theory, like \cite{Bondy-08}.

The degree set of a graph $G$ is denoted \cite{6} by $\mathscr{D}(G)$ and is defined as the set of all different vertex degrees of $G$. A graph whose degree set consists of only one element is called regular graph. Sometimes (for example, see \cite{Alavi-87}), the term ``irregular graphs'' is used for those graphs which are not regular, while sometimes (for example, see \cite{6}), the same term is used for a totally different purpose. Hence, in order to avoid confusion, and by following the references \cite{Stevanovic-04,vanDam-98,Cioaba-07}, we use the term ``nonregular graphs'' instead of ``irregular graphs'' for the graphs which are not regular.

A graph having maximum degree less than 5 is known as a molecular graph in chemical graph theory. Molecular graphs of annulenes, cycloalkanes and fullerenes are the examples of regular molecular graphs. The vast majority of molecular graphs is nonregular; some are more nonregular than others.

An irregularity measure ($IM$) of a connected graph $G$ is a non-negative graph invariant satisfying the property:  $IM(G) = 0$ if and only if $G$ is regular.
If $IM(G) > IM(H)$ then we say that $G$ is more nonregular than $H$ according to the considered irregularity measure $IM$.
Irregularity measures may play an important role in network theory \cite{Criado-IJCM-14,Estrada-PRE-10,Estrada-ACS-10,Snijders-81a,Snijders-81b,Lawrence-95} as well as in chemistry, particularly in the QSPR (quantitative structure-property relationship) and QSAR (quantitative structure-activity relationship) studies \cite{Gutman-JCIM-05,Reti-MATCH-18}.

Historically, the Gini index (some detail about this index is given in Section \ref{sec-3}), appeared implicitly in \cite{Gini-1912},
can be considered as one of the first irregularity measures. However, this index was intended to be used for a completely different purpose \cite{Bendel-89,Sen-73}. For $m\ge1$, the Gini index for an $(n,m)$-graph $G$, denoted by $\zeta(G)$, can be defined as follows
\[
\zeta(G)=\frac{1}{2mn} \sum_{\{u,v\} \subseteq V(G)} |d_u - d_v|\,.
\]
Here, it needs to be mentioned that $\zeta(G)=\frac{irr_t(G)}{2mn}$ where $irr_t$ is a recently introduced irregularity measure, namely the total irregularity \cite{Abdo-14}.

We may say that Collatz and Sinogowits \cite{Collatz-57} introduced explicitly the first irregularity measure, which is defined, for an $(n,m)$-graph $G$, as
$$
CS(G)= \lambda_1 - \frac{2m}{n}\,,
$$
where $\lambda_1$ is the greatest eigenvalue of the adjacency matrix of $G$. For $n\ge3$, Estrada \cite{Estrada-PRE-10} devised the following irregularity measure, under the name ``normalized heterogeneity index'', within the study of network heterogeneity:
\[
\rho(G)= \frac{n-2\cdot R(G)}{n-2\sqrt{n-1}}\,,
\]
where $R(G)$ is the Randi\'c index \cite{r3,Li08} of the $n$-vertex graph $G$. Actually, dozens of irregularity measures exist in literature and various new ones can be easily defined. In Table 1, those existing irregularity measures (together with their definitions and some relevant references) are given which will be discussed in this paper. Further detail about the existing irregularity measures can be found in the surveys \cite{Gutman-16,Ananias-de-Oliveira-13}, papers \cite{Reti-MATCH-18,Boaventura-Netto-15,Elphick-14,Milovanovic-15,Milovanovic-16,Reti-18,Boaventura-Netto-18} and in the references listed therein.

It is well-known fact that there does not exist any $n$-vertex graph whose all degrees are different for $n>1$. An $n$-vertex graph whose degree set consists of exactly $n-1$ elements is called the antiregular graph \cite{34} as well as the quasi-perfect graph \cite{33}, half-complete graph \cite{Fishburn}, maximally nonregular graph \cite{Zykov} and pairlone graph \cite{Salehi}; it seems that ``antiregular graphs'' is a generally accepted term for referring such kind of graphs \cite{35,37,38} (also see \cite{36} for some basic properties of the antiregular graphs), so we use this term in the remaining part of this paper. It is known \cite{33} that for every integer $n\ge2$ there is a unique antiregular $n$-vertex graph $A_n$ (and a unique disconnected antiregular $n$-vertex graph, which is actually the complement of $A_n$). Following the references \cite{Abdo-14,Boaventura-Netto-18,Boaventura-Netto-15}, we take antiregular graphs as the graphs opposite to the regular graphs.



\vskip 18mm

\begin{center}
\centerline{{\bf Table 1.} Some existing irregularity measures considered in this paper.}\label{tab2.0}

\vskip 3mm

\setstretch{1.7}
\begin{tabular}{ll} \hline
Name of irregularity measure & Definition for an $(n,m)$-graph $G$ \\ \hline
Gini index \cite{Gini-1912}  & $\zeta(G)=\frac{1}{2mn} \sum_{\{u,v\} \subseteq V(G)} |d_u - d_v|$  \\
Collatz-Sinogowitz index \cite{Collatz-57}  & $CS(G)= \lambda_1 - \frac{2m}{n}$  \\
Degree variance \cite{Bell,Haviland-06,Snijders-81a,Snijders-81b}  & $Var(G)= \frac{1}{n}\sum_{v\in V(G)} \left(d_v - \frac{2m}{n}\right)^2$ \\
Discrepancy  \cite{Haviland-06,Lawrence-95} & $Disc(G)= \frac{1}{n} \sum_{v\in V(G)} |d_v - \frac{2m}{n}|$ \\
Albertson index  \cite{Alb} & $A(G)= \sum_{uv\in E(G)} |d_u - d_v|$ \\
Degree deviation \cite{Nikiforov-06}  & $S(G)= n\cdot Disc(G)$ \\
Normalized heterogeneity index \cite{Estrada-PRE-10}  & $\rho(G)= \frac{n-2\cdot R(G)}{n-2\sqrt{n-1}}$ \\
Total irregularity \cite{Abdo-14}  & $irr_t(G)= 2mn \cdot \zeta(G)$ \\
Sigma index  \cite{Furtula-15,Gutman-18} & $\sigma(G)= \sum_{uv\in E(G)} (d_u - d_v)^2$ \\
\hline
\end{tabular}
\end{center}

\vskip 3mm

The following problem was posed in \cite{Reti-Conf-19}.

\begin{prob}\label{prob-1}

Let $G$, $R$, $A_n$ be any $n$-vertex graph, an $n$-vertex regular graph, an $n$-vertex antiregular graph, respectively. Is there any irregularity measure $IM$ which satisfies the inequality
\begin{equation}\label{eq-1}
IM(R) \le IM(G) \le IM (A_n)
\end{equation}
with left equality if and only if $G\cong R$ and the right equality holds if and only if $G\cong A_n$?
\end{prob}

The main purpose of the present article is
to devise two new irregularity measures having high discriminatory ability as well as satisfying the constraints specified in Problem \ref{prob-1}.
The newly developed irregularity measures are compared with some well-known existing irregularity measures and it is noted that the proposed measures give better results in a certain way.

\section{Construction of Two Irregularity Measures Possessing High Discriminatory Performance}


Before defining the two new irregularity measures, we would like to note, from Table 2, that among those existing irregularity measures which are considered in this paper, only the graph invariant $|\mathscr{D}(G)|-1$ satisfies the constraints specified in Problem \ref{prob-1}. However, according to Gutman \cite{Gutman-16} ``In the case of molecular graphs, the invariant $|\mathscr{D}(G)|-1$ should be applied with due caution, or -- better -- not applied at all.
Because, for the graphs depicted in Figure \ref{fig0}, it holds that $|\mathscr{D}(H_1)|=|\mathscr{D}(H_2)|$; but, intuitively, one would expect
that $H_2$ is much more nonregular than $H_1$''. Also, we observe that the total irregularity $irr_t$ satisfies \eqref{eq-1} and the extremal graphs for the left inequality of \eqref{eq-1} are same as mentioned in Problem \ref{prob-1}. However, there exist graphs different from $A_n$ for which the right equality sign in \eqref{eq-1} holds. Consequently, we need to define some new irregularity measures satisfying the conditions mentioned in Problem \ref{prob-1}.




\vskip 6mm

\begin{center}
\centerline{{\bf Table 2.} Some existing irregularity measures of the four graphs, shown in Figure \ref{fig1}.}\label{tab2.1}

\vskip 3mm

\begin{tabular}{ccccccccccccc} \hline
Graph & $m$ & $irr_t$ & $|\mathscr{D}|-1$ & $CS$ & $A$ & $\sigma$ & $Var$ & $S$ & $\zeta$ & $\rho$\\ \hline
 $G_1$& 9& 26& $4$ & 0.404& 16 &40 &1.667 &6.000  & 0.241 & 0.304\\
 $G_2$& 7& 26& $3$ & 0.481& 18 &56 &1.889 &6.667  & 0.310 & 0.522\\
 $G_3$& 8& 26& $3$ & 0.435& 20 &56 &1.889 &7.333  & 0.271 & 0.419\\
 $G_4$& 8& 26& $2$ & 0.510& 14 &44 &1.889 &6.667  & 0.271 & 0.433\\ \hline
\end{tabular}
\end{center}

\vskip 3mm

\begin{figure}[h]
 \centering
    \includegraphics[width=0.40\textwidth]{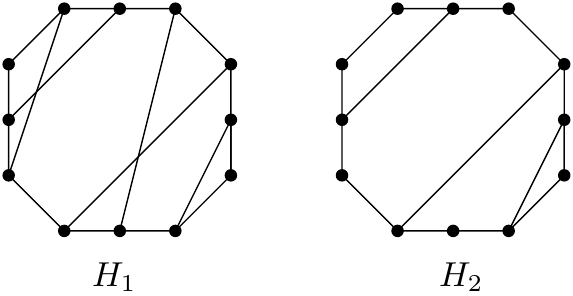}
    \caption{Two nonregular graphs with the same degree set.}
\label{fig0}
\end{figure}

\begin{figure}[h]
 \centering
    \includegraphics[width=0.40\textwidth]{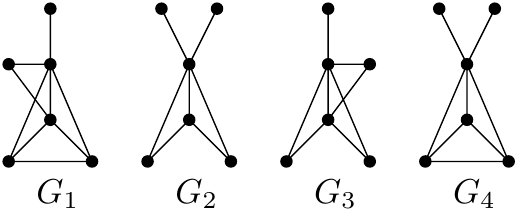}
    \caption{Four 6-vertex nonregular graphs with the same total irregularity.}
\label{fig1}
\end{figure}

From the computed irregularity measures given in Table 2, the following conclusions can be drawn for the graphs $G_i$ $(i=1,2,3,4)$:

\begin{itemize}
  \item For all graphs, the total irregularity index is same, that is $irr_t=26$, and hence we may say that $irr_t$ has a low discriminatory ability for the considered graphs.
  \item	Among the investigated irregularity measures, there are six measures ($CS$, $\sigma$, $Var$, $S$, $\zeta$ and $\rho$) having a minimum value for the antiregular graph $G_1$ and hence we conclude that this graph is less nonregular than each of the other three graphs according to these six irregularity measures.
  \item In addition to $irr_t$, the irregularity measures $\sigma$, $Var$, $S$, $\zeta$
  have only a limited discriminatory power for the graphs under consideration.
  Surprisingly, $Var(G_1)=1.667$, while $Var(G_2)= Var(G_3)= Var(G_4)=1.889$.
  \item For the majority of the considered irregularity measures, the right inequality in \eqref{eq-1} does not hold.

\end{itemize}

In what follows, it is assumed, unless stated otherwise, that $G$ is a graph of order at least 3 with the vertex set $V(G)=\{v_1,v_2,\cdots,v_n\}$ and degree sequence $(d_1,d_2,\cdots,d_n)$ such that $d_1\ge d_2\ge\cdots\ge d_n$ where $d_i = d_{v_i}$ for $i=1,2,\cdots,n$. We define an $n\times n$ matrix $\mathbf{B}(G)$ having entries
\[
b_{i,j}=
\begin{cases}
d_i - d_j & \text{if $i<j$,}\\
0 & \text{if $i=j$,}\\
d_j - d_i & \text{if $i>j$.}
\end{cases}
\]  							
The matrix $\mathbf{B}(G)$ is referred as the \textit{degree-difference matrix} of the graph $G$. Clearly, the matrix $\mathbf{B}(G)$ is a symmetric matrix. One can construct several different versions of the degree-difference matrix. For example, consider the $n\times n$ matrix $\mathbf{B^{(1)}}(G)$ of the graph $G$ whose components are defined by $b_{i,j}^{(1)}= d_i - d_j$.
Clearly, $\mathbf{B^{(1)}}(G)$ is an antisymmetric matrix. Another possible version of the degree-difference matrix is the matrix $\mathbf{B^{(2)}}(G)$ whose entries are defined as
$b_{i,j}^{(2)}= (d_i - d_j)^2$.

For a non-negative integer $k$, let $N_k(G)$ be the number of those upper diagonal entries of the matrix $\mathbf{B}(G)$ which are equal to $k$. In other words, $N_k(G)$ is the number of those pairs of vertices $(v_i, v_j)\in V(G)\times V(G)$ which satisfy $d_i - d_j = k$ for $i < j$.
Clearly, it holds that
\begin{equation}\label{eq-r1}
\sum_{k\ge0} N_k(G)=\frac{n(n-1)}{2}\,.
\end{equation}
Moreover, it is easy to see that
\[
irr_t(G)= \sum_{k\ge1} k N_k(G) 
\]
which is, also, equal to $\sum_{i<j} b_{i,j}$, that is, the sum of the upper diagonal entries of the matric $\mathbf{B}(G)$. 

We consider the 6-vertex nonregular graphs depicted in Figure \ref{fig1}. Because all of them have the same total irregularity (equal to 26), they cannot be distinguished in terms of their irregularity using the total irregularity. Efficient discrimination between graphs with equal total irregularity can be performed by constructing novel graph invariants that are highly sensitive to the structural differences in such graphs. The simplest such structure-sensitive invariant having an improved discriminatory power is the graph invariant $N_0(G)$. For the graphs shown in Figure \ref{fig1}, one obtains $N_0(G_j)=j$ for $j=1,2,3,4$. By means of $N_0(G)$, various irregularity measures can be generated; some of them seem to be efficient for the structural discrimination (ranking) of graphs with identical total irregularity. Here, we define the following two such irregularity measures
\[
IRA(G)=\frac{n(n-1)}{2}\cdot \frac{1}{N_0(G)} - 1 \quad \text{and} \quad  IRB(G)= 1-\frac{2}{n(n-1)}\cdot N_0(G) \,.
\]
Because of \eqref{eq-r1}, the formulas of the irregularity measures $IRA(G)$ and $IRA(G)$ can be rewritten as
\[
IRA(G)= \frac{1}{N_0(G)} \cdot \sum_{k\ge1} N_k(G)  \quad \text{and} \quad IRB(G)= \frac{2}{n(n-1)} \cdot \sum_{k\ge1} N_k(G)\,.
\]
Here, we note that $N_0(H_1)=46$ and $N_0(H_2)=30$, and hence using the irregularity measures $IRA$ and $IRB$, we remark that $H_2$ is more nonregular than $H_1$, as noted in the first paragraph of this section.

Now, for the 6-vertex graphs depicted in Figure \ref{fig1} having the same $irr_t$ value, we compute the newly defined irregularity measures $IRA$ and $IRB$; these are given in Table 3.
We note that the calculated values of $IRA$ and $IRB$ for these four graphs are all different and that the antiregular graph $G_1$ has the maximal values of the irregularity measures $IRA$ and $IRB$ among the considered graphs, which indicate that the measures $IRA$ and $IRB$ have a high discriminatory ability as well as these measures may satisfy the constraints given in Problem \ref{prob-1}, which is actually true due to Proposition \ref{prop-a1}. Consequently, we conclude that the newly developed irregularity measures $IRA$ and $IRB$ are somehow better, in a certain way, than the existing irregularity measures given in Table 1.

\vskip 9mm

\begin{center}
\centerline{{\bf Table 3.} The irregularity measures $IRA$ and $IRB$ of the graphs, shown in Figure \ref{fig1}.}\label{tab2.1}

\vskip 3mm

\begin{tabular}{cccccc} \hline
Graph & $m$ & $irr_t$ & $N_0$ & $IRA$ & $IRB$ \\ \hline
 $G_1$& 9& 26& $1$ &14.00   &0.933\\
 $G_2$& 7& 26& $2$ &6.50    &0.867\\
 $G_3$& 8& 26& $3$ &4.00    &0.800\\
 $G_4$& 8& 26& $4$ &2.75    &0.733\\ \hline
\end{tabular}
\end{center}

\vskip 3mm

\begin{lem}\label{obs-1}
It holds that $N_0(G)\ge1$ with equality if and only if $G$ is an antiregular graph.

\end{lem}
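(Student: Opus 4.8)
The plan is to express $N_0(G)$ in terms of the degree multiplicities and then read off both assertions from an elementary pigeonhole count. For each $d\in\mathscr{D}(G)$ let $m_d$ denote the number of vertices of $G$ having degree $d$, so that $\sum_{d\in\mathscr{D}(G)} m_d = n$. Grouping the unordered pairs of equal-degree vertices counted by $N_0(G)$ according to their common degree gives the identity $N_0(G)=\sum_{d\in\mathscr{D}(G)}\binom{m_d}{2}$, which will be the workhorse of the proof.

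For the inequality $N_0(G)\ge 1$ I would argue by pigeonhole. Since $G$ is connected and $n\ge 3$, no vertex of $G$ has degree $0$, hence every vertex degree lies in the set $\{1,2,\dots,n-1\}$, which has only $n-1$ elements. As $G$ has $n$ vertices, two of them must share a common degree, so at least one $m_d$ is at least $2$ and therefore $N_0(G)\ge\binom{2}{2}=1$. (Even without assuming connectedness the same bound holds, since a graph on $n$ vertices cannot simultaneously have a vertex of degree $0$ and one of degree $n-1$, so its degrees occupy at most $n-1$ of the $n$ possible values.)

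It then remains to characterise equality. Each summand $\binom{m_d}{2}$ is a non-negative integer, and it is positive precisely when $m_d\ge 2$, in which case $\binom{m_d}{2}\ge 1$ with equality only when $m_d=2$. Hence $N_0(G)=\sum_{d\in\mathscr{D}(G)}\binom{m_d}{2}=1$ if and only if exactly one degree value is attained by exactly two vertices and every other degree value is attained by exactly one vertex; equivalently, $|\mathscr{D}(G)|=n-1$, which by definition says that $G$ is an antiregular graph. Conversely, if $G$ is antiregular then $|\mathscr{D}(G)|=n-1$, and distributing the $n$ vertices among these $n-1$ degree values forces exactly one value to have multiplicity $2$ and the remaining ones multiplicity $1$, so $N_0(G)=1$.

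Since every step is either the grouping identity for $N_0(G)$ or an elementary pigeonhole/counting observation, I do not anticipate a substantive obstacle; the only point that merits care is the remark that degree $0$ (or, in the non-connected setting, the pair $0$ and $n-1$) cannot occur, which is exactly what makes the effective pigeonhole bound $n-1$ rather than $n$.
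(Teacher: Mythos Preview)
Your proof is correct and follows essentially the same idea as the paper: both rest on the classical fact that every non-trivial graph has at least two vertices of the same degree, which immediately gives $N_0(G)\ge 1$, and equality forces all but one degree value to be attained exactly once, i.e., $|\mathscr{D}(G)|=n-1$. You simply make this explicit via the identity $N_0(G)=\sum_{d}\binom{m_d}{2}$ (which the paper itself uses later, in the proof of Lemma~\ref{prop-8}), whereas the paper's proof is a one-line appeal to that same pigeonhole fact.
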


\begin{proof}
The desired result follows from the fact that every non-trivial graph contains at least two vertices of same degree.
\end{proof}

From the definitions of the irregularity measures $IRA$ and $IRB$, and from Lemma \ref{obs-1}, the next result follows.

\begin{prop}\label{prop-a1}
It holds that
\begin{equation}\label{eq-a1}
0\le IRA(G) \le \frac{n(n-1)}{2} - 1
\end{equation}
and
\begin{equation}\label{eq-a2}
0\le IRB(G) \le 1- \frac{2}{n(n-1)}\,.
\end{equation}
The left equality sign in either of Inequalities \eqref{eq-a1}, \eqref{eq-a2} holds if and only if $G$ is regular, while the right equality sign in either of Inequalities \eqref{eq-a1}, \eqref{eq-a2} holds if and only if $G$ is antiregular.

\end{prop}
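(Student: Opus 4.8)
The plan is to derive both displayed chains of inequalities directly from the closed-form expressions
\[
IRA(G)=\frac{n(n-1)}{2}\cdot\frac{1}{N_0(G)}-1
\qquad\text{and}\qquad
IRB(G)=1-\frac{2}{n(n-1)}\cdot N_0(G),
\]
together with the sharp bounds on $N_0(G)$, namely $1\le N_0(G)\le \binom{n}{2}$. The lower bound $N_0(G)\ge 1$ with its equality characterization is exactly Lemma \ref{obs-1}, so I may invoke it. For the upper bound: by \eqref{eq-r1} we have $\sum_{k\ge 0}N_k(G)=\frac{n(n-1)}{2}$, and since every $N_k(G)$ is a non-negative integer, $N_0(G)\le\frac{n(n-1)}{2}$, with equality precisely when $N_k(G)=0$ for all $k\ge 1$, i.e. when $d_i-d_j=0$ for all $i<j$, i.e. when $G$ is regular.

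Given these two-sided bounds on $N_0(G)$, the proposition follows by monotonicity. Since $x\mapsto \frac{n(n-1)}{2x}-1$ is strictly decreasing on $x>0$, plugging in $N_0(G)=\frac{n(n-1)}{2}$ gives the minimum value $0$ of $IRA$, while $N_0(G)=1$ gives the maximum value $\frac{n(n-1)}{2}-1$; this establishes \eqref{eq-a1} along with its equality cases (left equality $\iff$ $G$ regular, right equality $\iff$ $N_0(G)=1$ $\iff$ $G$ antiregular by Lemma \ref{obs-1}). Likewise $x\mapsto 1-\frac{2x}{n(n-1)}$ is strictly decreasing, so $N_0(G)=\frac{n(n-1)}{2}$ yields $IRB(G)=0$ and $N_0(G)=1$ yields $IRB(G)=1-\frac{2}{n(n-1)}$, giving \eqref{eq-a2} with the same equality characterizations. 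One small point worth spelling out is that these really are attained and are genuine extrema over the class of graphs of order at least $3$: for the right-hand sides this is the existence of the antiregular graph $A_n$ cited in the introduction, and for the left-hand sides it is the trivial observation that regular graphs of order $\ge 3$ exist.

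I do not anticipate a serious obstacle; the only thing requiring care is the equality analysis in the upper bound for $N_0(G)$, where one must argue that $N_0(G)=\binom{n}{2}$ forces \emph{all} degree differences to vanish (not merely the consecutive ones), which is immediate since $N_0$ counts \emph{all} pairs $i<j$ with $d_i=d_j$ and \eqref{eq-r1} is an exact identity. After that, everything reduces to the elementary monotonicity of a linear function and of a reciprocal, so the write-up is short.
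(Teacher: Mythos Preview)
Your proposal is correct and is essentially the paper's own argument spelled out in full: the paper simply states that the proposition follows ``from the definitions of the irregularity measures $IRA$ and $IRB$, and from Lemma \ref{obs-1},'' which is exactly your route via the bounds $1\le N_0(G)\le\binom{n}{2}$ and the monotonicity of the defining formulas. Your explicit use of \eqref{eq-r1} to pin down the upper bound on $N_0(G)$ and its equality case is a detail the paper leaves implicit, but the overall approach is identical.
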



Since $1- \frac{2}{n(n-1)}\rightarrow 1$ when $n\rightarrow \infty$, Proposition \ref{prop-a1} ensures that
the value of the irregularity measure $IRB(G)$ lies between 0 and 1.

Next, we compare the measures $IRA$ and $IRB$ with the total irregularity $irr_t$, which is a modified version of the Albertson index $A$. The following three facts can be considered as the main reasons for introducing $irr_t$ (the purpose of adding these three facts is that the irregularity measures $IRA$ and $IRB$ have all those advantages which the measure $irr_t$ has, and in addition, these two newly proposed measures also obeyed all the constraints given in Problem \ref{prob-1}, however $irr_t$ does not obey some of these constraints)\,:\\

\noindent
\textbf{Fact 1.}	The measure $irr_t$ can be calculated from the degree sequence of a graph $G$, while to calculate the Albertson index, one needs all the vertices' adjacency information for $G$;\\

\noindent
\textbf{Fact 2.}	Graphs with the same degree sequence have the same $irr_t$  value, while there exist some graphs, say $G$ and $G'$, with equal degree sequences such that $A(G) \ne A(G')$;\\

\noindent
\textbf{Fact 3.}    Among all the $n$-vertex graphs, the graphs with maximal Albertson index
are bidegreed graphs belonging to the family of complete split graphs \cite{Abdo-14-Filomat},
while the graphs with maximal $irr_t$ value have large degree sets.\\

\noindent
We remark that if ``$irr_t$'' is replaced with either ``$IRA$'' or ``$IRB$'' in the statements of Facts 1, 2, and 3, then the resulting statements also hold. In addition, we note that the graph with $(n-1)$-element degree set is the only graph with the maximal $IRA$ value (as well as maximal $IRB$ value) among all the  $n$-vertex graphs; this is not always the case for the total irregularity ``$irr_t$''.

Since the irregularity measures $IRA$ and $IRB$ depend only on the graph invariant $N_0$\,,
finding mathematical properties of the invariant $N_0$ seems to be an interesting work.

\begin{rmk}
From Lemma \ref{obs-1}, it follows that if the inequality $N_0(G) > 1$ holds then $G$ is not an antiregular graph. Consequently, the graph invariant $N_0(G)$ classify the $n$-vertex graphs into disjoint subsets (representing disjoint equivalence classes).

\end{rmk}

A graph whose degree set consists of only two elements is called a bidegreed graph. By a bidegreed partition $(A,B)$ of a bidegreed graph $G$, we mean a partition of $V(G)$ such that $d_u\ne d_v$ for every $u\in A$ and for every $v\in B$.

\begin{lem}\label{prop-8}
If the $n$-vertex nonregular graph $G$ has the maximum degree $\Delta$, then
\[
N_0(G) \le \frac{n(n-1)}{2} - \Delta\,
\]
with equality if and only if $G$ is a bidegreed graph containing a unique vertex of degree $n-1$.

\end{lem}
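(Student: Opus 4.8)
The plan is to restate the claimed bound, via the counting identity \eqref{eq-r1}, as a statement about the number of \emph{mixed} vertex pairs (pairs of distinct degree). Since $\sum_{k\ge0}N_k(G)=\frac{n(n-1)}{2}$ and $N_0(G)$ counts exactly the pairs $\{v_i,v_j\}$ with $d_i=d_j$, we have $\frac{n(n-1)}{2}-N_0(G)=\sum_{k\ge1}N_k(G)$, the number of pairs of vertices of unequal degree. Hence the inequality $N_0(G)\le\frac{n(n-1)}{2}-\Delta$ is equivalent to $\sum_{k\ge1}N_k(G)\ge\Delta$, and the equality case to $\sum_{k\ge1}N_k(G)=\Delta$.

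To exhibit $\Delta$ mixed pairs I would split $V(G)$ into the set $T$ of vertices of degree $\Delta$ and the set $S=V(G)\setminus T$ of vertices of degree $<\Delta$, and set $s=|S|$. Since $\Delta$ is attained, $T\neq\varnothing$, and since $G$ is nonregular, $S\neq\varnothing$, so $1\le s\le n-1$. Every pair with one endpoint in $S$ and one in $T$ is mixed, which already yields $s(n-s)$ mixed pairs; because $s\mapsto s(n-s)$ is concave and on the integer interval $[1,n-1]$ attains its minimum value $n-1$ at the endpoints, we get $\sum_{k\ge1}N_k(G)\ge s(n-s)\ge n-1\ge\Delta$, which proves the inequality.

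For the equality characterization, write $\sum_{k\ge1}N_k(G)=s(n-s)+M_S$, where $M_S\ge0$ is the number of mixed pairs with both endpoints in $S$. If $\sum_{k\ge1}N_k(G)=\Delta$, the chain $s(n-s)\le s(n-s)+M_S=\Delta\le n-1\le s(n-s)$ forces simultaneously $M_S=0$, $s(n-s)=n-1$, and $\Delta=n-1$; in particular $s\in\{1,n-1\}$. The case $s=1$ is impossible: it would mean $n-1$ vertices have degree $n-1$, so each of them is adjacent to all remaining vertices, in particular to the single vertex of $S$, forcing that vertex to have degree $n-1$ as well, contradicting nonregularity. Hence $s=n-1$, so there is a unique vertex of degree $\Delta=n-1$, and $M_S=0$ forces the remaining $n-1$ vertices to share one common degree $d<n-1$; thus $\mathscr{D}(G)=\{d,n-1\}$ and $G$ is a bidegreed graph with a unique vertex of degree $n-1$. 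Conversely, if $G$ is such a graph, then $\Delta=n-1$, the set $S$ consists of the $n-1$ vertices of the other degree with $M_S=0$, and the only mixed pairs are the $n-1$ pairs joining the apex vertex to $S$, so $\sum_{k\ge1}N_k(G)=n-1=\Delta$ and equality holds.

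The identity manipulation and the endpoint minimum of $s(n-s)$ are immediate; the only delicate point is the equality analysis — isolating the three constraints $M_S=0$, $s(n-s)=n-1$, $\Delta=n-1$ and then discarding $s=1$. I expect that exclusion to be the main (minor) obstacle, resting on the elementary fact that $n-1$ vertices of degree $n-1$ in an $n$-vertex simple graph force the graph to be complete, hence regular.
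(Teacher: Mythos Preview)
Your proof is correct. Both arguments rest on the same complementary count---the number of mixed pairs equals $\binom{n}{2}-N_0(G)$---but you and the paper choose different partitions to bound it from below. The paper takes an arbitrary degree class of size $n-k$, uses the convexity bound $N_0(G)\le\binom{n-k}{2}+\binom{k}{2}=\binom{n}{2}-k(n-k)$, and then shows $k(n-k)\ge\Delta$ by splitting into the cases $k\le\Delta$ and $k>\Delta$. You instead fix the partition to be the maximum-degree class $T$ versus its complement $S$; since every $T$--$S$ pair is automatically mixed, the whole argument collapses to the single chain $s(n-s)\ge n-1\ge\Delta$ with no case distinction. Your choice buys a shorter proof and a particularly clean equality analysis (the three constraints $M_S=0$, $s(n-s)=n-1$, $\Delta=n-1$ drop out at once), while the paper's route makes the formula $N_0(G)=\sum_i\binom{n_i}{2}$ explicit, which is of independent interest. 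The exclusion of $s=1$ via the observation that $n-1$ vertices of degree $n-1$ force $K_n$ is exactly the right way to handle that endpoint.
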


\begin{proof}
We note that
\[
N_0(G)= \sum_{i=1}^{\Delta} \frac{n_i(n_i-1)}{2}\,,
\]
where $n_i$ is the number of vertices of degree $i$ in $G$.
Suppose that one of the $n_i$'s is $n-k$ for some fixed $k\in \{1,2,\cdots,n-1\}$.

If $k \le \Delta$, then it holds that
\begin{align}\label{eq:1}
N_0(G) &\le \frac{(n-k)(n-k-1)}{2} + \frac{k(k-1)}{2} = \frac{n(n-1)}{2} - k(n-1) +k(k-1) \nonumber \\
  & \le \frac{n(n-1)}{2} - k\Delta +k(k-1) \le \frac{n(n-1)}{2} - \Delta\,.
\end{align}
The last inequality holds if $k(k-1) \le \Delta (k-1)$, which certainly obeyed.
We note that the equality sign holds throughout in \eqref{eq:1} if and only if $G$ is a bidegreed graph with the bidegreed partition $(A,B)$ such that one of $|A|$, $|B|$ is $k$ and the other is $n-k$, $\Delta=n-1$ and either $k=1$ or $k=\Delta$; that is, if and only if $G$ is a bidegreed graph containing a unique vertex of degree $n-1$.

If $k > \Delta$, then we have
\[
N_0(G) \le  \frac{n(n-1)}{2} - k(n-k) \le \frac{n(n-1)}{2} - k < \frac{n(n-1)}{2} - \Delta\,.
\]
because of $n-k\ge1$.

\end{proof}

The following proposition is a direct consequence of Lemma \ref{prop-8}.

\begin{prop}\label{prop-8new1}
If an $n$-vertex nonregular graph $G$ has the maximum degree $\Delta$, then
\[
IRA(G) \ge \frac{2\Delta}{n(n-1) -2\Delta} \,
\]
and
\[
IRB(G) \ge  \frac{2\Delta}{n(n-1)}\,,
\]
with equality if and only if $G$ is a bidegreed graph containing a unique vertex of degree $n-1$.

\end{prop}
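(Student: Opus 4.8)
The plan is to deduce both inequalities directly from the bound $N_0(G)\le \frac{n(n-1)}{2}-\Delta$ furnished by Lemma~\ref{prop-8}, exploiting the fact that $IRA$ and $IRB$ are strictly decreasing functions of the single quantity $N_0(G)$.

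First I would record a positivity remark: by Lemma~\ref{obs-1} we have $N_0(G)\ge 1$, so combining this with Lemma~\ref{prop-8} gives $\frac{n(n-1)}{2}-\Delta\ge N_0(G)\ge 1$, hence $n(n-1)-2\Delta\ge 2>0$. This guarantees that the denominator in the asserted lower bound for $IRA$ is positive, and that all the reciprocals taken below are of positive numbers.

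Next, for $IRB$ I would substitute the bound of Lemma~\ref{prop-8} into the definition $IRB(G)=1-\frac{2}{n(n-1)}N_0(G)$; since $N_0(G)$ enters with a negative coefficient,
\[
IRB(G)\ge 1-\frac{2}{n(n-1)}\left(\frac{n(n-1)}{2}-\Delta\right)=\frac{2\Delta}{n(n-1)}\,.
\]
For $IRA$, since $0<N_0(G)\le \frac{n(n-1)}{2}-\Delta$ we have $\frac{1}{N_0(G)}\ge \frac{2}{n(n-1)-2\Delta}$, and therefore
\[
IRA(G)=\frac{n(n-1)}{2}\cdot\frac{1}{N_0(G)}-1\ge \frac{n(n-1)}{n(n-1)-2\Delta}-1=\frac{2\Delta}{n(n-1)-2\Delta}\,.
\]

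Finally, for the equality statements I would track when equality holds in each chain above. In both computations the only genuine inequality used is the one from Lemma~\ref{prop-8}, and the monotone substitutions that follow preserve equality; hence equality holds in either displayed bound if and only if $N_0(G)=\frac{n(n-1)}{2}-\Delta$, which by Lemma~\ref{prop-8} happens precisely when $G$ is a bidegreed graph containing a unique vertex of degree $n-1$. I do not expect any real obstacle here; the only point warranting care is the verification that $n(n-1)-2\Delta>0$, so that the stated bound for $IRA$ is well defined, and this is exactly what the positivity remark above secures.
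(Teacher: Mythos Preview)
Your argument is correct and follows precisely the route the paper takes: the proposition is stated there as a direct consequence of Lemma~\ref{prop-8}, and your write-up simply spells out the monotone substitution of the bound $N_0(G)\le \frac{n(n-1)}{2}-\Delta$ into the definitions of $IRA$ and $IRB$. The added positivity check that $n(n-1)-2\Delta>0$ is a nice touch that the paper leaves implicit.
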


If $H_1$, $H_2$ are two bidegreed $n$-vertex graphs with $n_\Delta(H_1)=n_\Delta(H_2)$ or $n_\delta(H_1)=n_\Delta(H_2)$ then from the equation
\[
N_0(G)= \sum_{i=1}^{\Delta} \frac{n_i(n_i-1)}{2}\,,
\]
the next result follows.

\begin{prop}\label{prop-8new1aa}
If $H_1$, $H_2$ are two bidegreed $n$-vertex graphs with $n_\Delta(H_1)=n_\Delta(H_2)$ or $n_\delta(H_1)=n_\Delta(H_2)$ then
$
IRA(H_1) = IRA(H_2)
$
and
$
IRB(H_1) = IRB(H_2)
$.
\end{prop}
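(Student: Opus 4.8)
The plan is to reduce the whole statement to one elementary symmetry property of the invariant $N_0$ on bidegreed graphs. First I would record that if $G$ is a bidegreed $n$-vertex graph with degree set $\{\delta,\Delta\}$, $\delta<\Delta$, then in the identity
\[
N_0(G)=\sum_{i=1}^{\Delta}\frac{n_i(n_i-1)}{2}
\]
(established in the proof of Lemma~\ref{prop-8}) only the indices $i=\delta$ and $i=\Delta$ contribute, since $n_i=0$ for all other $i$. Hence
\[
N_0(G)=\frac{n_\Delta(n_\Delta-1)}{2}+\frac{n_\delta(n_\delta-1)}{2},\qquad n_\Delta+n_\delta=n .
\]
Substituting $n_\delta=n-n_\Delta$ and putting $f(a)=\frac{a(a-1)}{2}+\frac{(n-a)(n-a-1)}{2}$, we see that $N_0(G)=f\bigl(n_\Delta(G)\bigr)$ depends on $G$ only through the order $n$ and the number $n_\Delta(G)$. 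The crucial point is that $f$ is symmetric about $a=n/2$, i.e. $f(a)=f(n-a)$, because swapping $n_\Delta$ and $n_\delta$ leaves the symmetric expression $\frac{n_\Delta(n_\Delta-1)}{2}+\frac{n_\delta(n_\delta-1)}{2}$ unchanged.

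Next I would dispose of the two hypotheses in turn. If $n_\Delta(H_1)=n_\Delta(H_2)$, then immediately $N_0(H_1)=f\bigl(n_\Delta(H_1)\bigr)=f\bigl(n_\Delta(H_2)\bigr)=N_0(H_2)$. If instead $n_\delta(H_1)=n_\Delta(H_2)$, then
\[
n_\Delta(H_1)=n-n_\delta(H_1)=n-n_\Delta(H_2)=n_\delta(H_2),
\]
so, using the symmetry of $f$,
\[
N_0(H_1)=f\bigl(n_\Delta(H_1)\bigr)=f\bigl(n-n_\Delta(H_2)\bigr)=f\bigl(n_\Delta(H_2)\bigr)=N_0(H_2).
\]
In either case $N_0(H_1)=N_0(H_2)$. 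Finally, since
\[
IRA(G)=\frac{n(n-1)}{2}\cdot\frac{1}{N_0(G)}-1,\qquad IRB(G)=1-\frac{2}{n(n-1)}\,N_0(G),
\]
both measures are functions of $n$ and $N_0(G)$ only; as $H_1$ and $H_2$ share the same order $n$ and, by the previous paragraph, the same value of $N_0$, we conclude $IRA(H_1)=IRA(H_2)$ and $IRB(H_1)=IRB(H_2)$.

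I do not expect any real obstacle: the entire content is the identity $f(a)=f(n-a)$, and the only thing to be slightly careful with is confirming, in the second case, that the two unordered pairs of part sizes coincide, which is exactly the line $n_\Delta(H_1)=n_\delta(H_2)$ displayed above. (Note also that the two listed hypotheses already cover all four natural possibilities: $n_\delta(H_1)=n_\delta(H_2)$ is equivalent to $n_\Delta(H_1)=n_\Delta(H_2)$, and $n_\Delta(H_1)=n_\delta(H_2)$ is the second hypothesis with the roles of $H_1$ and $H_2$ interchanged.)
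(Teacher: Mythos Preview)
Your proposal is correct and follows exactly the paper's approach: the paper simply states that the result follows from the identity $N_0(G)=\sum_{i=1}^{\Delta}\frac{n_i(n_i-1)}{2}$, and you have merely unpacked this by writing $N_0(G)=f\bigl(n_\Delta(G)\bigr)$ for the symmetric function $f(a)=f(n-a)$ and checking the two cases. There is no substantive difference between your argument and the paper's.
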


The following corollary is direct consequence of Proposition \ref{prop-8new1aa}.

\begin{cor}\label{cor-8new1aa}
If $H_1$, $H_2$ are two regular $n$-vertex graphs such that $H_1-e_1$, $H_2-e_2$ are connected then
$
IRA(H_1-e_1) = IRA(H_2-e_2)
$
and
$
IRB(H_1-e_1) = IRB(H_2-e_2)
$, where $e_1\in E(H_1)$ and $e_2\in E(H_2)$.
\end{cor}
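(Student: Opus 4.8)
The plan is to observe that deleting a single edge from a connected regular graph always produces a bidegreed graph of one fixed shape, and then to quote Proposition \ref{prop-8new1aa}.

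First I would fix notation. The hypothesis that $H_1 - e_1$ is connected forces $H_1$ itself to be connected, so (recalling the standing convention $n\ge 3$) the regular graph $H_1$ has some common vertex degree $r_1\ge 2$; write $e_1=uv$ with $u,v\in V(H_1)$. In $H_1-e_1$ the two vertices $u$ and $v$ have degree $r_1-1\ge 1$, while each of the remaining $n-2\ge 1$ vertices keeps degree $r_1$. Hence the degree set of $H_1-e_1$ has exactly two elements, so $H_1-e_1$ is a bidegreed $n$-vertex graph with maximum degree $\Delta=r_1$, with $n_\Delta(H_1-e_1)=n-2$ and $n_\delta(H_1-e_1)=2$; the connectivity of $H_1-e_1$ is precisely what guarantees that $IRA$ and $IRB$ are defined on it (and it also rules out the degenerate case $n=2$, $H_1\cong K_2$). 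Running the same argument for $H_2$ shows that $H_2-e_2$ is likewise a bidegreed $n$-vertex graph with $n_\Delta(H_2-e_2)=n-2$.

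Now I would simply apply Proposition \ref{prop-8new1aa} to the pair of bidegreed $n$-vertex graphs $H_1-e_1$ and $H_2-e_2$. Since $n_\Delta(H_1-e_1)=n-2=n_\Delta(H_2-e_2)$, that proposition yields $IRA(H_1-e_1)=IRA(H_2-e_2)$ and $IRB(H_1-e_1)=IRB(H_2-e_2)$, which is the assertion.

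There is no genuine obstacle here; this is exactly the kind of corollary it is billed to be. The only points that deserve a line of care are: first, checking that $H_i-e_i$ is truly bidegreed rather than regular, which is where the convention $n\ge 3$ together with the connectivity hypothesis enter; and second, recalling, through the identity $N_0(G)=\sum_{i=1}^{\Delta}\tfrac{n_i(n_i-1)}{2}$ used in the proof of Proposition \ref{prop-8new1aa}, that for a bidegreed $n$-vertex graph the value of $N_0$ — and hence of $IRA$ and $IRB$ — is already determined by $n$ together with $n_\Delta$ alone, since $n_\delta=n-n_\Delta$.
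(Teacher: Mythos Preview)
Your argument is correct and is exactly the intended one: the paper states only that the corollary is a direct consequence of Proposition \ref{prop-8new1aa}, and your observation that $H_i-e_i$ is a bidegreed $n$-vertex graph with $n_\Delta=n-2$ supplies precisely the missing verification. The care you take with $n\ge 3$ and connectivity to ensure $r_i\ge 2$ (hence genuine bidegreedness) is appropriate and not spelled out in the paper.
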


Several existing irregularity measures have different values for the graphs $P_6$ (the 6-vertex path graph which is isomorphic to the graph obtained from the 6-vertex cycle (a regular graph) graph by removing an edge) and $K_6-e$ (the graph obtained from the 6-vertex complete graph by removing an edge); for example, the Albertson index, Collatz-Sinogowitz index, Gini index, etc. and hence according to these irregularity measures, one of the two graphs $P_6$, $K_6-e$, is more nonregular than the other one.
Contrary to this, one intuitively would expect that both the graphs $P_6$ and $K_6-e$ have same degree of irregularity or better to say that neither of these two graphs is more nonregular than the other one; the same conclusion implies from Corollary \ref{cor-8new1aa}.
This example demonstrates clearly that $IRA$ and $IRB$ indices quantify basically the structural heterogeneity of the graphs $P_6$ and $K_6-e$. Strictly speaking, $IRA$ and $IRB$ indices characterize (measure) the heterogeneity (inhomogeneity) difference between the vertex-degree distributions of the considered graphs.

\section{A New/Old Formulation of the Total Irregularity}\label{sec-3}

Let $Y=(y_1,y_2,\cdots,y_n)$ be a sequence of non-negative real numbers $y_i$, for which it holds that $y_1\ge y_2\ge \cdots \ge y_n$ and that $\mu(Y) = \frac{\sum_{i=1}^n y_i}{n}\ne 0$. The Gini index $\zeta$ (also known as the Gini coefficient), attributed to Gini \cite{Gini-1912}, for the sequence $Y$ can be written (see page 31 in \cite{Sen-73}) as
\[
\zeta(Y)=\frac{1}{2n^2\cdot \mu(Y)} \sum_{i=1}^n\sum_{j=1}^n |y_i-y_j|=1-\frac{1}{n^2\cdot \mu(Y)} \sum_{i=1}^n (2i-1)y_i\,.
\]
Since the average degree of an $(n,m)$-graph $G$ containing at least one edge is $2m/n$, the Gini index for $G$ can be defined as follows
\[
\zeta(G)=\frac{1}{4mn} \sum_{i=1}^n\sum_{j=1}^n |d_i-d_j|=1-\frac{1}{2mn} \sum_{i=1}^n (2i-1)d_i\,,
\]
where $V(G)=\{v_1,v_2,\cdots,v_n\}$, $d_1\ge d_2\ge\cdots\ge d_n$, and $d_i = d_{v_i}$ for $i=1,2,\cdots,n$. Clearly, $\zeta(G)\ge0$ with equality if and only if $G$ is regular, which means that the Gini index is also an irregularity measure. Here, it needs to be mentioned that the Gini index is bounded between 0 and 1; for example, see \cite{Bendel-89}. We note that the total irregularity of the graph $G$ can be rewritten as
\begin{equation*}\label{Eq-new-01}
irr_t(G)= 2mn \cdot \zeta(G) = \sum_{i=1}^n (n+1 -2i)d_i =2m(n+1) - 2 \sum_{i=1}^n id_i\,.
\end{equation*}
In order to find the $irr_t$ value of a graph $G$, one may prefer the formula $irr_t(G)= \sum_{i=1}^n (n+1 -2i)d_i$ due to its simplicity instead of $irr_t(G)=\frac{1}{2}\sum_{i=1}^n\sum_{j=1}^n |d_i-d_j|$.





\begin{thebibliography}{999}
\setlength{\itemsep}{0pt}

\bibitem{Abdo-14} H. Abdo, S. Brandt, D. Dimitrov, The total irregularity of a graph, \textit{Discrete Math. Theor. Comput. Sci.} \textbf{16} (2014) 201--206.


\bibitem{Abdo-14-Filomat} H. Abdo, N. Cohen, D. Dimitrov, Graphs with maximal irregularity, \textit{Filomat} \textbf{28} (2014) 1315--1322.


\bibitem{35} C. O. Aguilar, J. L. Lee, E. Piato, Spectral characterization of anti-regular graphs, \textit{Linear Algebra Appl.} \textbf{557} (2018) 84--104.

\bibitem{Alb} M. O. Albertson,  The irregularity of a graph, \textit{Ars Combin.} \textbf{46} (1997) 219--225.

\bibitem{Alavi-87} Y. Alavi, G. Chartrand, F. R. K. Chung, P. Erd\H{o}s, R. L. Graham, O. R. Oellermann, Highly irregular graphs, \textit{J. Graph Theory} \textbf{11} (1987) 235--249.

\bibitem{Ananias-de-Oliveira-13} J. Ananias de Oliveira, C. Silva Oliveira, C. Justel, N. M. Maia de Abreu, Measures of irregularity of graphs, \textit{Pesquisa Operacional} \textbf{33(3)} (2013) 383--398.

\bibitem{Bell} F. K. Bell, A note on the irregularity of graphs, \textit{Linear Algebra Appl.} \textbf{161} (1992) 45-–54.

\bibitem{Bendel-89} R. B. Bendel, S. S. Higgins, J. E. Teberg, D. A. Pyke, Comparison of skewness coefficient, coefficient of variation, and Gini coefficient as inequality measures within population, \textit{Oecologia} \textbf{78} (1989) 394--400.

\bibitem{33} M. Bezhad, G. Chartrand, No graph is perfect, \textit{Amer. Math. Monthly} \textbf{74} (1967) 962--963.

\bibitem{Boaventura-Netto-15} P. O. Boaventura-Netto,   Graph irregularity: discussion, graph extensions and new proposals, \textit{Revista de Matem\'atica: Teor\'ia y Aplicaciones} \textbf{22(2)} (2015) 293--310.

\bibitem{Boaventura-Netto-18} P. Boaventura-Netto, L. de Lima, G. Caporossi, Exhaustive and metaheuristic exploration of two new structural irregularity measures, preprint.

\bibitem{Bondy-08} J. A. Bondy, U. S. R. Murty, \textit{Graph Theory}, Springer, 2008.

\bibitem{6} G. Chartrand, P. Erd\H{o}s, O. Oellermann, How to define an irregular graph, \textit{College Math. J.} \textbf{19} (1988) 36--42.

\bibitem{Cioaba-07} S. M. Cioab\u{a}, D. A. Gregory, V. Nikiforov, Extreme eigenvalues of nonregular graphs, \textit{J. Combin. Theory Ser. B} \textbf{97} (2007)  483--486.

\bibitem{Collatz-57} L. Collatz, U. Sinogowitz, Spektren endlicher Grafen, \textit{Abh. Math. Sem. Univ. Hamburg} \textbf{21} (1957) 63--77.

\bibitem{Criado-IJCM-14} R. Criado, J. Flores, A. G. del Amo and M. Romance, Centralities of a network and its line graph: an analytical comparison by means of their irregularity, \textit{Int. J. Comput. Math.} \textbf{91} (2014) 304--314.

\bibitem{vanDam-98} E. R. V. Dam, Nonregular graphs with three eigenvalues, \textit{J. Combin. Theory Ser. B} \textbf{73} (1998) 101--118.

\bibitem{Elphick-14} C. Elphick, P. Wocjan, New measures of graph irregularity, \textit{El. J. Graph Theory Appl.} \textbf{2(1)} (2014) 52--65.


\bibitem{Estrada-PRE-10} E. Estrada, Quantifying network heterogeneity, \textit{Phys. Rev. E} \textbf{82} (2010) 066102.

\bibitem{Estrada-ACS-10} E. Estrada, Randi\'c index, irregularity and complex biomolecular networks, \textit{Acta Chim. Slov.} \textbf{57} (2010) 597--603.

\bibitem{Fishburn} P. C. Fishburn, Packing graphs with odd and even trees, \textit{J. Graph Theory} \textbf{7} (1983) 369-–383.

\bibitem{Furtula-15} B. Furtula, I. Gutman, \v{Z}. K. Vuki\'cevi\'c, G. Lekishvili, G. Popivoda, On an old/new degree–based topological index, \textit{Bull. Acad. Serbe Sci. Arts} (\textit{Cl. Sci. Math. Natur.}) \textbf{40} (2015) 19--31.

\bibitem{Gini-1912} C. Gini, Variabilit\`{a} e mutabilit\`{a}, Reprinted in: E. Pizetti, T. Salvemini (Eds.), \textit{Memorie di metodologica statistica}, Rome, Libreria Eredi Virgilio Veschi, 1912.

\bibitem{Gutman-16} I. Gutman, Irregularity of molecular graphs, \textit{Kragujevac J. Sci.} \textbf{38} (2016) 99--109.


\bibitem{Gutman-JCIM-05} I. Gutman, P. Hansen, H. M\'elot, Variable neighborhood search for extremal graphs.10. Comparison of irregularity indices for chemical trees, \textit{J. Chem. Inf. Model.} \textbf{45}(2) (2005) 222--230.

\bibitem{Gutman-18} I. Gutman, M. Togan, A. Yurttas, A.S. Cevik, I.N. Cangul, Inverse problem for sigma index, \textit{MATCH Commun. Math. Comput. Chem.} \textbf{79} (2018) 491--508.

\bibitem{Haviland-06} J. Haviland, On irregularity in graphs, \textit{Ars Combin.} \textbf{78} (2006) 283--288.

\bibitem{Lawrence-95} C. J. Lawrence, K. Tizzard, J. Haviland, Disease-spread and stochastic graphs, \textit{Proc. International Conference on Social Networks}, London  1995, pp. 143--150.

\bibitem{38} V. E. Levit, E. Mandrescu, On the independence polynomial of an antiregular graphs, \textit{Carpathian J. Math.} \textbf{28} (2012) 279--288.

\bibitem{Li08} X. Li, Y. Shi, A survey on the Randi\'{c} index, \textit{MATCH Commun. Math. Comput. Chem.} \textbf{59} (2008) 127--156.

\bibitem{34} R. Merris, Antiregular graphs are universal for trees, \textit{Univ. Beograd. Publ. Elektrotehn. Fak. Ser. Mat.} \textbf{14} (2003) 1--3.

\bibitem{Milovanovic-15} E. Milovanovi\'c, E. Glogi\'c, I. Milovanovi\'c, M. Cvjetkovi\'c, Irregularity measures of graphs, \textit{Sci. Publ. State Univ. Novi Pazar Ser. A: App. Math. Inform. Mechan.} \textbf{7(2)} (2015) 105--116.

\bibitem{Milovanovic-16} I. Milovanovi\'c, E. Milovanovi\'c, V. \'Ciri\'c, N. Jovanovi\'c, On some irregularity measures of graphs, \textit{Sci. Publ. State Univ. Novi Pazar Ser. A: App. Math. Inform. Mechan.} \textbf{8(1)} (2016) 21--34.

\bibitem{37} E. Munarini, Characteristics, admittance and matching polynomials of an antiregular graph, \textit{Appl. Anal. Discrete Math.} \textbf{3} (2009) 157--176.


\bibitem{36} L. Nebesk\'{y}, On connected graphs containing exactly two points of the same degree, \textit{\v{C}asopis P\v{e}st Mat.} \textbf{98} (1973) 305--306.

\bibitem{Nikiforov-06} V. Nikiforov, Eigenvalues and degree deviation in graphs, \textit{Linear Algebra Appl.} \textbf{414} (2006) 347--360.

\bibitem{r3} M. Randi\'{c}, On characterization of molecular branching, \textit{J. Am. Chem. Soc.} \textbf{97} (1975) 6609--6615.

\bibitem{Reti-18} T. R\'eti, Graph irregularity and a problem raised by Hong, \textit{Acta Polytech. Hung.} \textbf{15(6)} (2018) 27--43.

\bibitem{Reti-Conf-19} T. R\'eti, A. Ali, On the comparative study of nonregular networks, preprint.

\bibitem{Reti-MATCH-18} T. R\'eti, R. Sharafdini, \'A. Dr\'egelyi-Kiss, H. Haghbin, Graph irregularity indices used as molecular descriptors in QSPR studies, \textit{MATCH Commun. Math. Comput. Chem.} \textbf{79} (2018) 509--524.

\bibitem{Salehi} E. Salehi, On $P_3$-degree of graphs, \textit{J. Combin. Math. Combin. Comput.} \textbf{62} (2007) 45--51.

\bibitem{Sen-73} A. Sen,  \textit{On Economic Inequality}, Oxford University Press (Clarendon), New York, 1973.

\bibitem{Snijders-81a} T. A. B. Snijders, Maximum value and null moments of the degree variance, Department of Mathematics Report TW-229, University of Groningen, 1981.
\bibitem{Snijders-81b} T. A. B. Snijders, The degree variance: an index of graph heterogeneity, \textit{Social Networks} \textbf{3} (1981) 163--174.

\bibitem{Stevanovic-04} D. Stevanovi\'c, The largest eigenvalue of nonregular graphs, \textit{J. Combin. Theory Ser. B} \textbf{91} (2004) 143-–146.

\bibitem{Zykov} A. A. Zykov, Fundamentals of graph theory, BCS Associates, Moscow, 1990.






























\end{thebibliography}
\end{document}